\newtheorem{theorem}{Theorem}[section]
\newtheorem{lemma}[theorem]{Lemma}
\newtheorem{conjecture}[theorem]{Conjecture}
\newtheorem{proposition}[theorem]{Proposition}
\theoremstyle{definition}
\theoremstyle{remark}
\newtheorem{remark}[theorem]{Remark}
\numberwithin{equation}{section}
\newcommand{\Var}{\rm{Var}}
\newcommand{\RR}{\mathds{R}}
\newcommand{\E}{\mathbb{E}}
\begin{document}

\title{On integrability of quadratic harnesses}%
\author{Wlodek Bryc}%
\address{
Department of Mathematical Sciences,
University of Cincinnati,
PO Box 210025,
Cincinnati, OH 45221--0025, USA}
\email{brycw@math.uc.edu}%
%\thanks{This research was partially supported by NSF
%grant \#DMS-0904720.}%

\subjclass[2000]{60E15 }
\keywords{integrability, polynomial regression, quadratic harnesses}%

\date{File: \jobname.tex Printed \today}%
%\dedicatory{}%
%\commby{}%
% ----------------------------------------------------------------
\begin{abstract}    We investigate integrability properties of processes with linear regressions and quadratic conditional variances.  We establish the right order of dependence of which moments are finite  on the parameter $\sigma\tau$ defined below, raising the question of determining the optimal constant.
\end{abstract}
\maketitle
% ----------------------------------------------------------------
\section{Introduction and main results}

 The study of processes with linear regression and quadratic conditional variances was initiated by Pluci\'nska \cite{Plucinska:1983} who gave a characterization  of the Wiener process in terms of conditional means and conditional variances.
 This approach was extended by several authors in many directions, sometimes with a trade-off between the amount of conditioning assumed and how much information is available about the covariance.
 Of the early generalizations, we mention here the work of Szab\l owski \cite{Szabowski:1989} who extracts information from minimal assumptions on conditioning, provided that the covariance is smooth. In his influential work  Weso\l owski \cite{Wesolowski:1993} characterized processes whose conditional variances is an arbitrary quadratic polynomial in the increments of the process; besides the Wiener process, such processes turned out to be either the Poisson process, or the negative binomial process, or the gamma process, or the hyperbolic secant process.

  Processes with conditional variances given by more general quadratic polynomials have been analyzed in a series of papers that started with \cite{Bryc98}, and culminated with Askey-Wilson polynomials in \cite{Bryc-Wesolowski-08}. In these papers, the authors used algebraic techniques  to study associated  orthogonal polynomials, and to identify and construct  a number of more exotic Markov processes with linear regressions and quadratic conditional variances.

  The technique of orthogonal polynomials  relies on apriori information about the existence of  moments of all orders.
 Integrability can be deduced from assumptions on conditional variances, see \cite{Bryc85b}, \cite{Wesolowski:1993}, and more recently \cite[Theorem 2.5]{Bryc-Matysiak-Wesolowski-04}.  Unfortunately, available integrability results do not cover all possible quadratic  conditional variances, and in fact they do not use the full power of the "quadratic harness condition" \eqref{LR} and \eqref{Var} at all. Instead, they   are based on the following weaker assumptions which appeared in  \cite[(2.7),(2.8)(2.27), (2.28)]{Bryc-Matysiak-Wesolowski-04}:
%
%
%
%  The last result clearly did not use the "full power" of the two-sided conditioning, so there were some hopes that perhaps quadratic harnesses with small enough coefficients at the quadratic terms would still have all moments finite. Such hopes were dashed by the construction in \cite{Maja:2009}, who for any $\theta>0$ and $\sigma>0$ constructed a class of Markov process on $[0,\infty)$ with the following properties: for $s<t<u$,
 \begin{equation}\label{cov}
 \E(X_t)=0, \E(X_t,X_s)=\min\{s,t\}
 \end{equation}
% \begin{equation}\label{LR}
% E(X_t|X_s,X_u)=\frac{u-t}{u-s}X_s+\frac{t-s}{u-s}X_t
% \end{equation}
%\begin{multline}\label{Var}
% \Var(X_t|X_s,X_u)=F_{t,s,u}\Big(1+\theta \frac{X_u-X_s}{u-s}
%+ \theta \frac{uX_s-sX_u}{u-s}
%\\+\tau \frac{(X_u-X_s)^2}{(u-s)^2}+ \tau
%\frac{(uX_s-sX_u)^2}{(u-s)^2} -2\tau\frac{(uX_s-sX_u)(X_u-X_s)}{(u-s)^2}
% \Big)
%\end{multline}
%where
%$$
%F_{t,s,u}=...
%$$
%and such that
%$E\|X_t|^p=\infty$ for $p>1+\frac{1}{\tau}$.
%
%It is known, see \cite[(2.7),(2.8)(2.27), (2.28)]{Bryc-Matysiak-Wesolowski-04}, that \eqref{cov}, \eqref{LR} and  \eqref{Var}
%imply that
 For $s<t$, the one-sided conditional moments satisfy:
\begin{equation}\label{mart}
\E(X_t|X_s)=X_s,  \; \E(X_s|X_t)=\frac{s}{t}X_t\,,
\end{equation}
and there are constants $\eta,\theta\in\RR$ and $\sigma,\tau\in[0,\infty)$ such that
 \begin{equation}\label{qmart1}
 \Var(X_t|X_s)\leq \frac{t-s}{1+\sigma s}\left(1+\eta X_s+\sigma X_s^2\right),\;
\end{equation}
\begin{equation}\label{qmart2}
 \Var(X_s|X_t)\leq \frac{s(t-s)}{t+\tau }\left(1+\theta X_t/t+\tau X_t^2/t^2\right). \end{equation}
 Our goal is to show that these expressions imply finiteness of moments of order  $1/\sqrt{\sigma\tau}$, improving a bound in  \cite[Theorem 2.5]{Bryc-Matysiak-Wesolowski-04} who prove finiteness of moments of a logarithmic order  $-\log(\sigma \tau)$.
 \begin{theorem}\label{T1}
 If $(X_t)_{t<0}$ is a square-integrable process such that \eqref{cov}, \eqref{mart},   \eqref{qmart1}, and \eqref{qmart2}
 hold for all $s<t<u$ with some $\sigma\tau> 0$ , then $\E(|X_t|^p)<\infty$ for all $0\leq p\leq \frac{1}{240\sqrt{\sigma\tau}}$.
 \end{theorem}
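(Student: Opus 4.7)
My approach would proceed by induction on integer $n\ge 1$, establishing finiteness and a quantitative bound on $m_{2n}(t):=\E(X_t^{2n})$ for all $n$ below a threshold of order $1/\sqrt{\sigma\tau}$. The base case $n=1$ is the assumed $\E(X_t^2)=t$ in \eqref{cov}, and Lyapunov's inequality then extends the conclusion to non-integer $p$. A preliminary truncation step (replacing $X_t$ by $(X_t\wedge N)\vee(-N)$, deriving inequalities uniformly in $N$, and passing to the limit via Fatou) is needed to justify manipulation of a~priori possibly infinite quantities.

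The core of the argument rests on two families of moment inequalities obtained by computing $\E(X_s^{2n}X_t^2)$ and $\E(X_s^2 X_t^{2n})$ in two different ways. Conditioning on $X_s$ and using $\E(X_t^2|X_s)=X_s^2+\Var(X_t|X_s)$ together with \eqref{qmart1} yields
\[
\E(X_s^{2n} X_t^2)\le\frac{1+\sigma t}{1+\sigma s}\,m_{2n+2}(s)+\frac{t-s}{1+\sigma s}\bigl(m_{2n}(s)+\eta\,m_{2n+1}(s)\bigr),
\]
while the conditional Jensen inequality $\E(X_s^{2n}|X_t)\ge(\E(X_s|X_t))^{2n}=(s/t)^{2n}X_t^{2n}$ combined with \eqref{mart} gives the lower bound $\E(X_s^{2n}X_t^2)\ge(s/t)^{2n}m_{2n+2}(t)$. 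A symmetric computation on $\E(X_s^2 X_t^{2n})$, now conditioning on $X_t$ and applying \eqref{qmart2}, produces a dual inequality with $\sigma,\eta$ replaced by $\tau,\theta$. Substituting the two inequalities into one another yields an implicit upper bound for $m_{2n+2}(t)$ in terms of moments of order at most $2n+1$, all of which are finite by the inductive hypothesis (the odd moment $m_{2n+1}$ being controlled via Cauchy-Schwarz against $m_{2n}$ and $m_{2n+2}$).

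\emph{Main obstacle.} A single pair $s<t$ is insufficient: the combined contraction factor behaves like $(s/t)^{2n}-(s/t)<0$ as $t\to\infty$ at fixed ratio $s/t$, so a single forward-backward step cannot close the recursion uniformly in $t$. To recover the sharp rate $p\lesssim 1/\sqrt{\sigma\tau}$ and improve on the logarithmic bound of \cite{Bryc-Matysiak-Wesolowski-04}, one must iterate the coupled forward/backward inequalities along a carefully chosen chain $s_0<s_1<\cdots<s_K=t$, so that per-step multiplicative corrections of order $1+O(\sigma\tau)$ accumulate into an effective contraction after $K\sim 1/\sqrt{\sigma\tau}$ steps. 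The threshold $n\le 1/(240\sqrt{\sigma\tau})$ then emerges from balancing this cumulative gain against the inductive growth of the lower-order $m_{2n}$ plus the Cauchy-Schwarz slack for $m_{2n+1}$; the constant $240$ reflects loose intermediate estimates, and optimizing it is the open question the author raises.
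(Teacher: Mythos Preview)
Your approach via moment recursions is genuinely different from the paper's, but the scheme you outline does not close. Combining your forward estimate
\[
(s/t)^{2n}\,m_{2n+2}(t)\le \frac{1+\sigma t}{1+\sigma s}\,m_{2n+2}(s)+\text{(lower order)}
\]
with the dual backward estimate one obtains by your recipe,
\[
m_{2n+2}(s)\le \frac{s(s+\tau)}{t(t+\tau)}\,m_{2n+2}(t)+\text{(lower order)},
\]
and substituting, the resulting self-inequality for $m_{2n+2}(t)$ is usable only if
\[
\Bigl(\frac{s}{t}\Bigr)^{2n}>\frac{(1+\sigma t)\,s\,(s+\tau)}{(1+\sigma s)\,t\,(t+\tau)}.
\]
But for every $0<s<t$ the right-hand side is at least $(s/t)^2$ (since $\frac{1+\sigma t}{1+\sigma s}\ge 1$ and $\frac{s+\tau}{t+\tau}\ge s/t$), whereas the left-hand side is $(s/t)^{2n}\le (s/t)^2$ for all $n\ge 1$. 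So the inequality goes the wrong way for \emph{every} pair $s<t$ and every $n\ge 1$; the Jensen lower bound $\E(X_s^{2n}\mid X_t)\ge (s/t)^{2n}X_t^{2n}$ is simply too crude. Iterating along a chain cannot repair this: each link contributes a multiplicative factor on the wrong side of $1$, and these only compound. The scaling heuristic you give (``corrections of order $1+O(\sigma\tau)$ over $K\sim 1/\sqrt{\sigma\tau}$ steps'') also does not yield a contraction, since $(1+c\sigma\tau)^{1/\sqrt{\sigma\tau}}\to 1$.

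The paper avoids moment recursions altogether and works instead with tail probabilities. For a \emph{single} well-chosen pair $s=\rho\sqrt{\tau/\sigma}$, $t=\sqrt{\tau}/(\rho\sqrt{\sigma})$ (so that the normalized variables $X=X_s/\sqrt s$, $Y=X_t/\sqrt t$ satisfy symmetric conditional-variance bounds with quadratic coefficient $\delta=2\sqrt{\sigma\tau}$), a conditional Chebyshev argument gives a tail inequality of the form $N(Kt)\le\bigl(C_1/t^2+C_2/t+8\delta/(1-\rho)\bigr)N(t)$ with $K=2/\rho-1$. Choosing $\rho=1-1/(p+1)$ makes $K^{p+1}$ bounded (by $2e^2$) while $8\delta K^{p+1}/(1-\rho)<1$ precisely when $p+1\lesssim 1/\sqrt{\sigma\tau}$; integrating the tail inequality against $t^p$ then upgrades $p$-th moments to $(p+1)$-th moments. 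The $1/\sqrt{\sigma\tau}$ threshold thus comes from a single optimized pair of times, not from a chain.
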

 \begin{remark}  From \cite{Maja:2009} it follows that for any $\sigma=\tau>0$  moments  of order $2+\frac{1}{\sqrt{\sigma\tau}}$ may fail to exist.
For comparison, Theorem \ref{T1}  implies integrability of order $p\leq 1+\frac{1}{480\sqrt{\sigma\tau}}$.
(Indeed, $1+\frac{1}{480\sqrt{\sigma\tau}}=\frac{1}{2}\left(2+\frac{1}{240\sqrt{\sigma\tau}}\right)\leq\max\left\{2,\frac{1}{240\sqrt{\sigma\tau}}\right\}$.)
\end{remark}
 \begin{remark}Of course, if $\sigma\tau=0$ then moments of any order $p>0$ are finite.  Since this is already known from  \cite[Theorem 2.5]{Bryc-Matysiak-Wesolowski-04}, here we concentrate on the case  $\sigma\tau>0$.
 \end{remark}

\section{Proof}
The proof is based on tail estimates introduced in  \cite{Bryc85b}; the main improvement over   \cite[Theorem 2.5]{Bryc-Matysiak-Wesolowski-04} comes from a more careful choice of random variables $X_{1/u}, X_u$ with $u$ close to $1$. Such a choice was suggested by \cite[Theorem 2]{Szabowski:1986}.

We shall deduce Theorem \ref{T1} from the following result.
 \begin{proposition}\label{P1} Under the assumptions of Theorem \ref{T1}, if
  $2< p+1\leq \frac{1}{240\sqrt{\sigma\tau}}$ and
 $\E(|X_{t}|^{p})<\infty$  for all $t$, then there exists $t_0>0$ such that $\E(|X_{t_0}|^{p+1})<\infty$.
 \end{proposition}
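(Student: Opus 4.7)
My plan is to adapt the tail-estimate technique of \cite{Bryc85b} to the time scale $s=1/u,\ t=u$ with $u>1$ close to $1$ suggested by \cite{Szabowski:1986}. First I would convert each conditional-variance hypothesis into a one-variable tail inequality via conditional Chebyshev. For $\beta>1$ the event $\{|X_s|>\beta x,\,|X_t|\leq x\}$ forces $|X_t-X_s|>(\beta-1)x$, so \eqref{mart} and \eqref{qmart1} give
\[
P(|X_s|>\beta x) \leq P(|X_t|>x) + \frac{t-s}{(\beta-1)^2(1+\sigma s)x^2}\,\E\!\left[(1+\eta X_s+\sigma X_s^2)\mathbf{1}_{|X_s|>\beta x}\right],
\]
while for $\alpha\in(0,s/t)$ the event $\{|X_t|>y,\,|X_s|\leq\alpha y\}$ forces $|X_s-(s/t)X_t|>y(s/t-\alpha)$, and \eqref{qmart2} yields
\[
P(|X_t|>y) \leq P(|X_s|>\alpha y) + \frac{s(t-s)}{(s/t-\alpha)^2 y^2(t+\tau)}\,\E\!\left[(1+\theta X_t/t+\tau X_t^2/t^2)\mathbf{1}_{|X_t|>y}\right].
\]

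I would then multiply these by $x^p\,dx$ and $y^p\,dy$ respectively and integrate over $(0,\infty)$, using the identity $\int_0^\infty y^{p-2}\,\E[Z^2\mathbf{1}_{|Z|>ay}]\,dy=\E(|Z|^{p+1})/((p-1)a^{p-1})$ (valid for $p>1$) to convert tail bounds into bounds on $f(t):=\E(|X_t|^{p+1})$. Writing off the contributions involving $\E(|X_s|^{p-1})$, $\E(|X_s|^p)$, $\E(|X_t|^{p-1})$, $\E(|X_t|^p)$ as finite constants by hypothesis, I obtain
\[
f(s)(1-\lambda_A)\leq\beta^{p+1}f(t)+C_A,\qquad f(t)(1-\lambda_B)\leq\alpha^{-(p+1)}f(s)+C_B,
\]
with $\lambda_A$ proportional to $\sigma(t-s)(p+1)\beta^2/((\beta-1)^2(1+\sigma s)(p-1))$ and $\lambda_B$ to $\tau(t-s)(p+1)/((s/t-\alpha)^2 s(t+\tau)(p-1))$. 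Specializing $s=1/u$, $t=u$ and choosing the gap parameters $\beta-1$ and $s/t-\alpha$ of order $1/(p+1)$, and $u-1$ of order $1/(\sqrt{\sigma\tau}\,(p+1)^{3})$, keeps $\lambda_A,\lambda_B$ strictly below $1$ while keeping the scaling factors $\beta^{p+1}$ and $(\alpha u^2)^{-(p+1)}$ bounded by absolute constants.

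The main obstacle is the combination step. A direct substitution of one inequality into the other yields $f(s)\bigl[(1-\lambda_A)(1-\lambda_B)-(\beta/\alpha)^{p+1}\bigr]\leq\text{finite}$, and since $\beta>1$ and $\alpha<s/t=1/u^{2}$ are forced by the derivation, $(\beta/\alpha)^{p+1}>u^{2(p+1)}>1\geq(1-\lambda_A)(1-\lambda_B)$, so the bracket is strictly negative and the inequality is vacuous. A successful combination must therefore extract more than this naive substitution gives. I expect the proof leverages Szabłowski's symmetric pairing to choose $\beta\alpha\approx 1/u^{2}$ exactly, so that the excess scaling reduces to $u^{2(p+1)}\approx 1+O(\sigma\tau(p+1)^{2})$, and then absorbs this tiny excess into the additive contributions $\lambda_A+\lambda_B$ rather than into the product $(1-\lambda_A)(1-\lambda_B)$. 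The geometric-mean dependence $\sqrt{\sigma\tau}$ (rather than the worse $\sigma\vee\tau$) is the reward for using both bounds symmetrically; carrying out this balancing under the constraint $p+1\leq 1/(240\sqrt{\sigma\tau})$ and arriving at the explicit constant $240$ is the substantive technical content.
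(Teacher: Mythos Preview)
Your diagnosis of the obstacle is correct, and your proposal does not overcome it. The two cross-inequalities $f(s)(1-\lambda_A)\le \beta^{p+1}f(t)+C_A$ and $f(t)(1-\lambda_B)\le \alpha^{-(p+1)}f(s)+C_B$ can never close, for exactly the reason you state: $\beta>1$ and $\alpha<s/t$ are forced, so the scaling factor always beats $(1-\lambda_A)(1-\lambda_B)$. No amount of balancing $\beta\alpha$ against $s/t$ fixes this; the structural defect is that you have split each tail probability into only two pieces (the other variable large, or the other variable small) and are trying to bootstrap $f(s)$ against $f(t)$.

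The paper proceeds differently. After normalizing to $X=X_s/\sqrt{s}$, $Y=X_t/\sqrt{t}$ with correlation $\rho=\sqrt{s/t}$, it works not with $f(s)$ and $f(t)$ separately but with the single symmetric tail $N(x)=\Pr(|X|>x)+\Pr(|Y|>x)$, and proves an inequality of the form $N(Kx)\le \bigl(C_1/x^2+C_2/x+8\delta/(1-\rho)\bigr)N(x)$ with $K=2/\rho-1>1$. The decomposition behind this has \emph{three} pieces: $P_1=\Pr(|X|>x,|Y|>x)$, $P_2=\Pr(|X|>Kx,|Y|\le x)$, $P_3=\Pr(|Y|>Kx,|X|\le x)$, so that $N(Kx)\le 2P_1+P_2+P_3$. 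The term your argument lacks is $P_1$: on $\{|X|>x,|Y|>x\}$ one of $|X-\rho Y|$, $|Y-\rho X|$ exceeds $(1-\rho)\sqrt{(x^2+\cdot^2)/2}$, and conditional Chebyshev against \eqref{C1}--\eqref{C2} produces the crucial factor $2\delta/(1-\rho)$ multiplying $N(x)$. For $P_2$ one writes $|Y-\rho X|\ge \sqrt{2\delta\rho(1-\rho)}\,|X|+c\,x$, so that the quadratic term $\rho(1-\rho)\delta X^2$ in the conditional variance is divided by $2\delta\rho(1-\rho)X^2$ and contributes exactly $\tfrac12\Pr(|X|>Kx)$, which is reabsorbed into $N(Kx)$. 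Integrating $\int_0^M x^p N(x)\,dx$ against the recursion and choosing $\rho=1-1/(p+1)$ gives $K^{p+1}<2e^2$, so the loop closes once $16e^2\delta(p+1)<1$, i.e.\ $120\,\delta(p+1)<1$.

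Finally, the time points are not $s=1/u$, $t=u$, but $s=\rho\sqrt{\tau/\sigma}$, $t=\sqrt{\tau}/(\rho\sqrt{\sigma})$, chosen so that the coefficients of $X^2$ and $Y^2$ in the two conditional-variance bounds become \emph{equal} to $\rho\sqrt{\sigma\tau}$; this symmetrization is what turns the product $\sigma\tau$ into $\delta=2\sqrt{\sigma\tau}$ and yields the constant $240$.
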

 We first show how Theorem \ref{T1} follows, and then we will prove Proposition \ref{P1}.
 \begin{proof}[Proof of Theorem \ref{T1}] It suffices to show that the moment of order $p_0=\frac{1}{240\sqrt{\sigma\tau}}$ exists. Without loss of generality we may assume that $p_0>2$, that is we consider only $\sigma\tau$ small enough.

 We apply Proposition \ref{P1} recursively. Define $p_k=p_0-k$, $k=0,1,\dots, K$, where $K=\min\{k: p_0-k\leq 2\}$.  Then $K\geq 1$ so $2<p_K+1\leq p_0 \leq \frac{1}{240\sqrt{\sigma\tau}}$ and  by square-integrability of the process, $\E(|X_t|^{p_K})<\infty$ for all $t$. So by Proposition \ref{P1}, there exists $t_0>0$ such that $\E(|X_{t_0}|^{p_{K-1}})<\infty$. However,   \eqref{mart} then implies that $\E(|X_{t}|^{p_{K-1}})<\infty$ for all $t$.
If $K=1$, the recursion ends; otherwise, if $K\geq 2$,  the recursion  can be continued: by Proposition \ref{P1},  there exists $t_0>0$ such that $\E(|X_{t_0}|^{p_{K-2}})<\infty$, and so on.  The recursion ends with $\E(|X_{t}|^{p_{1}+1})=\E(|X_{t}|^{p_{0}})<\infty$.
\end{proof}
 \subsection{Proof of Proposition \ref{P1}}
 We first consider a pair of square-integrable random variables $X,Y$ such that
  \begin{eqnarray}
 \E((X-\rho Y)^2|Y)&\leq&A+B|Y|+ (1-\rho)  \rho \delta Y^2 \label{C1}\\
  \E((Y-\rho X)^2|X)&\leq&A+B|X|+ (1-\rho)  \rho \delta X^2\label{C2}
 \end{eqnarray}
for some constants $A,B\geq 0$.
 For small enough $\delta$ we have the following tail estimate.
 \begin{lemma}\label{L1}
 Suppose $X,Y$ are square-integrable and satisfy \eqref{C1} and \eqref{C2} with some $1/2<\rho<1$, and $\delta<(1-\rho)/64$. Denote $N(t)=\Pr(|X|>t)+\Pr(|Y|>t)$ and let $K=2/\rho-1$. Then there are constants $C_1,C_2$ such that for $t>0$,
% \begin{equation}\label{N(Kt)}
% N(Kt)\leq \left(\frac{C_1}{t^2}+\frac{C_2}{t}+\frac{4\delta}{(1-\rho)^2}\right)N(t)
% \end{equation}
  \begin{equation}\label{N(Kt)}
 N(Kt)\leq \left(\frac{C_1}{t^2}+\frac{C_2}{t}+\frac{8\delta}{1-\rho}\right)N(t).
 \end{equation}
 \end{lemma}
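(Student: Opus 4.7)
I plan to apply Chebyshev's inequality in a form that pairs conditions \eqref{C1} and \eqref{C2} with the specific scale $K=2/\rho-1$. By the symmetric roles of $X$ and $Y$ in the two hypotheses, it suffices to estimate $\Pr(|X|>Kt)$ with the coefficients of the lemma; the estimate for $\Pr(|Y|>Kt)$ is then obtained by swapping the roles of $X$ and $Y$ and invoking \eqref{C2} in place of \eqref{C1}, and the two are added to give the bound on $N(Kt)$.

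The choice $K=2/\rho-1$ is designed so that on the event $\{|X|>Kt,\,|Y-\rho X|\le(1-\rho)t\}$ one has $|Y|\ge\rho|X|-|Y-\rho X|>(2-\rho)t-(1-\rho)t=t$, giving the decomposition
\[
\Pr(|X|>Kt)\le \Pr(|Y|>t)+\Pr(|X|>Kt,\,|Y-\rho X|>(1-\rho)t).
\]
For the second summand, because $\mathbf{1}_{|X|>Kt}$ is $\sigma(X)$-measurable, Chebyshev conditioned on $X$ together with \eqref{C2} gives
\[
\Pr(|X|>Kt,\,|Y-\rho X|>(1-\rho)t)\le\frac{\E\!\bigl[(A+B|X|+(1-\rho)\rho\delta X^2)\mathbf{1}_{|X|>Kt}\bigr]}{(1-\rho)^2 t^2}.
\]
Each moment now carries the indicator $\mathbf{1}_{|X|>Kt}$, and the layer-cake identity
\[
\E[|X|^k\mathbf{1}_{|X|>Kt}]=(Kt)^k\Pr(|X|>Kt)+\int_{Kt}^\infty ks^{k-1}\Pr(|X|>s)\,ds\qquad(k=1,2)
\]
extracts a factor $\Pr(|X|>Kt)\le N(t)$ from the leading term. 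The leading piece $(Kt)^2\Pr(|X|>Kt)\le K^2t^2N(t)$ of the quadratic moment, combined with the prefactor $\rho\delta/(1-\rho)$, contributes $\rho K^2\delta/(1-\rho)\cdot N(t)$; the elementary identity $\rho K^2=(2-\rho)^2/\rho\le 9/2<8$ valid on $\rho\in(1/2,1)$ yields the stated $8\delta/(1-\rho)$. The $A$ and $B$ pieces, together with the tail-integral corrections, feed into $C_1/t^2\cdot N(t)$ and $C_2/t\cdot N(t)$.

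The main obstacle is careful control of two residual contributions. First, the tail integrals $\int_{Kt}^\infty\Pr(|X|>s)\,ds$ and $\int_{Kt}^\infty 2s\,\Pr(|X|>s)\,ds$ arising from the layer-cake expansions are a priori only $o(1)$ from square-integrability, and must be absorbed into the $C_1/t^2$ and $C_2/t$ coefficients using a secondary Chebyshev applied at a smaller scale. Second, the leading $\Pr(|Y|>t)$ term in the decomposition looks like a full $N(t)$ on the right; in the sum $N(Kt)=\Pr(|X|>Kt)+\Pr(|Y|>Kt)$ it must combine with the symmetric $\Pr(|X|>t)$ from the companion bound so that the two together contribute $N(t)$ on the right exactly once. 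The standing hypothesis $\delta<(1-\rho)/64$ then forces the effective multiplier of $N(t)$ strictly below $1$, which is precisely what allows the lemma to be iterated in the derivation of Proposition~\ref{P1}.
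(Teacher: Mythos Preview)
Your decomposition
\[
\Pr(|X|>Kt)\le \Pr(|Y|>t)+\Pr\bigl(|X|>Kt,\ |Y-\rho X|>(1-\rho)t\bigr)
\]
cannot yield the stated inequality. After symmetrizing and adding, the two bare terms $\Pr(|Y|>t)+\Pr(|X|>t)$ already contribute a full $1\cdot N(t)$ on the right. The lemma, however, requires the large-$t$ multiplier of $N(t)$ to be $8\delta/(1-\rho)<1/8$; the gap $1-8\delta/(1-\rho)$ is a fixed positive constant times $N(t)$ and cannot be pushed into the $C_1/t^2$ or $C_2/t$ buckets. Your final sentence (``forces the effective multiplier strictly below $1$'') is therefore incorrect: with this decomposition the multiplier is at least $1$, not below $1$. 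A second, independent problem is that the layer-cake remainder $\frac{\rho\delta}{(1-\rho)t^{2}}\int_{Kt}^{\infty}2s\,\Pr(|X|>s)\,ds$ is in general not dominated by any $C\,N(t)/t$; square-integrability only gives $o(1)$, whereas $N(t)$ can decay arbitrarily fast, so the ``secondary Chebyshev'' you allude to does not close the estimate.

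The paper avoids both obstacles by a three-way split $N(Kt)\le 2P_1+P_2+P_3$ with $P_1=\Pr(|X|>t,|Y|>t)$ and $P_2=\Pr(|X|>Kt,|Y|\le t)$ (and $P_3$ symmetric). For $P_1$ one conditions on the \emph{large} variable and uses the denominator $(1-\rho)^2(Y^2+t^2)/2$, so the quadratic term $\rho(1-\rho)\delta Y^2$ contributes exactly a small constant $\frac{2\delta}{1-\rho}$ times $\Pr(|Y|>t)$; this is where the $\delta/(1-\rho)$ coefficient actually originates. For $P_2$ the crucial trick is to lower-bound $|Y-\rho X|$ by a threshold that \emph{grows with} $|X|$, namely $\sqrt{2\delta\rho(1-\rho)}\,|X|+(1-\rho)^2 t$ (this is where the hypothesis $\delta<(1-\rho)/64$ is used, to keep the residual $t$-coefficient positive). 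With that denominator the quadratic numerator $(1-\rho)\rho\delta X^2$ is cancelled pointwise, producing $\tfrac12\Pr(|X|>Kt)$ rather than a tail integral; this $\tfrac12 N(Kt)$ is then subtracted from the left side. Your fixed threshold $(1-\rho)t$ misses this cancellation, which is the heart of the argument.
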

 \begin{proof} Throughout the proof, $C_1,C_2$ denote generic positive constants that may change from line to line.
 As in the proof of \cite[Theorem 2.5]{Bryc-Matysiak-Wesolowski-04}, we write
 \begin{equation}\label{P1+P2}
 N(Kt)\leq 2 P_1(t)+P_2(t)+P_3(t),
 \end{equation}
where
$
P_1(t)=\Pr(|X|>t, |Y|>t)$, $P_2(t)=\Pr(|X|>Kt, |Y|\leq t)$, $P_3(t)=\Pr(|X|\leq t, |Y|>Kt)
$.
Next we observe that
\begin{multline*}
P_1(t)\leq \Pr\left(|X-\rho Y|>\frac{(1-\rho)}{\sqrt{2}}\sqrt{Y^2+t^2},|Y|>t\right)\\+ \Pr\left(|Y-\rho X|>\frac{(1-\rho)}{\sqrt{2}}\sqrt{X^2+t^2},|X|>t\right).
\end{multline*}
By \eqref{C1},  conditional Chebyschev's inequality gives
\begin{multline*}
 \Pr\left(|X-\rho Y|>\frac{(1-\rho)}{\sqrt{2}}\sqrt{Y^2+t^2},|Y|>t\right)
 \\
 \leq
2\int _{|Y|>t}\frac{A+B|Y|+(1-\rho)\rho\delta Y^2}{(1-\rho)^2(Y^2+t^2)}dP
\\
\leq  \left(\frac{A}{(1-\rho)^2 t^2}+ \frac{B}{(1-\rho)^2 t}+\frac{2\delta}{1-\rho} \right)\Pr(|Y|>t).
\end{multline*}
(Here we also used a trivial bound $\rho\leq 1$.)
Since a similar bound follows from \eqref{C2}, we get
\begin{equation}\label{Bound_P1}
P_1(t)\leq \left(\frac{C_1}{t^2}+\frac{C_2}{t}+\frac{2\delta}{1-\rho}\right)N(t),
\end{equation}
Next, we use the trivial bound $|Y-\rho X|\geq \rho |X|-|Y|$ to estimate
\begin{multline*}
P_2(t) %=\Pr(|X|>Kt, |Y|\leq t)\leq \Pr(|X|>Kt, |Y-\rho X|>\rho|X|-|Y|, |Y|\leq t)
 \leq  \Pr(|X|>Kt, |Y-\rho X|>\rho|X|-t)
\\
\leq  \Pr\left(|X|>Kt, |Y-\rho X|>\sqrt{2\delta\rho (1-\rho)}|X|+(\rho-\sqrt{2\delta \rho(1-\rho)})|X|-t\right)\\
\leq  \Pr\left(|X|>Kt, |Y-\rho X|>\sqrt{2\delta\rho (1-\rho)}|X|+(K (\rho-\sqrt{2\delta \rho(1-\rho)})-1)t\right).
\end{multline*}
In the last bound, we already used the inequality $\rho-\sqrt{2\delta \rho(1-\rho)}>0$, which we now further  improve. Since  $\rho>1/2$ and $\delta<(1-\rho)/64$, we get $2\rho \delta<\rho (1-\rho)/2^5<\rho^4(1-\rho)/(2-\rho)^2$, and
$$
K (\rho-\sqrt{2\delta \rho (1-\rho)})>K(\rho-(1-\rho)\rho^2/(2-\rho))=(2-\rho)\left(1- \frac{\rho(1-\rho)}{2-\rho}\right)
%=2-\rho-\rho+\rho^2
=1+(1-\rho)^2.$$
Therefore,
$$P_2(t)\leq  \Pr\left(|X|>Kt, |Y-\rho X|>\sqrt{2\delta\rho (1-\rho)}|X|+(1-\rho)^2t\right),
$$
and from \eqref{C2} we get
$$
P_2(t)\leq \int_{|X|>Kt} \frac{A +B|X| + (1-\rho)\rho\delta X^2}{(\sqrt{2\delta\rho (1-\rho)}|X|+(1-\rho)^2t)^2}dP.
$$
Thus
\begin{equation*}\label{Bound_P2}
P_2(t)\leq \left(\frac{C_1}{t^2}+\frac{C_2}{t}\right)N(t)+ \frac12 \Pr(|X|>Kt),
\end{equation*}
and with similar bound applied to $P_3(t)$ we get
\begin{equation}\label{P23}
P_2(t)+P_3(t)\leq \left(\frac{C_1}{t^2}+\frac{C_2}{t}\right)N(t)+ \frac12 N(Kt).
\end{equation}
Combining \eqref{P1+P2} with \eqref{Bound_P1} and \eqref{P23} we get
$$
N(Kt)\leq  \left(\frac{C_1}{t^2}+\frac{C_2}{t}+\frac{4\delta}{1-\rho}\right)N(t)+ \frac12 N(Kt).
$$
Thus \eqref{N(Kt)} follows.
 \end{proof}
Next, we prove the integrability lemma.
\begin{lemma}\label{L2} Fix $p>1$ and a pair of square-integrable random variables $X,Y$ such that $\E|X|^p+\E|Y|^p<\infty$.
Suppose $X,Y$  satisfy \eqref{C1} and \eqref{C2} with $\rho=1-1/(p+1)<1$, and $120 \delta(p+1)<1$.
%If $4 e^2 \delta (p+1)<1$ t
Then $\E|X|^{p+1}+\E|Y|^{p+1}<\infty$.
\end{lemma}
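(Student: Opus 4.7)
The plan is to iterate Lemma \ref{L1} along a geometric sequence $K^n t_0$ and then bound the $(p+1)$-moment via the tail integral
\[
\E|X|^{p+1}+\E|Y|^{p+1}\le (p+1)\int_0^\infty t^p N(t)\,dt,
\]
where $N(t)=\Pr(|X|>t)+\Pr(|Y|>t)$. The contribution from $[0,t_0]$ is trivially finite since $N\le 2$, so the whole task reduces to controlling the tail integral.

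First I would verify that Lemma \ref{L1} is applicable. The condition $p>1$ gives $\rho=1-1/(p+1)>1/2$, and the hypothesis $120\delta(p+1)<1$ yields $\delta<1/(120(p+1))<(1-\rho)/64$. Set $K=2/\rho-1=(p+2)/p$ and $q_0:=8\delta/(1-\rho)=8\delta(p+1)$; the hypothesis forces $q_0<1/15$.

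The heart of the argument is the key inequality $K^{p+1}q_0<1$. An elementary computation shows that $(p+1)\log(1+2/p)$ is strictly decreasing on $(0,\infty)$, so $K^{p+1}=(1+2/p)^{p+1}\le 3^{2}=9$ for all $p\ge 1$. Hence $K^{p+1}q_0<9/15<1$, and one can choose $q$ with $q_0<q<K^{-(p+1)}$. By \eqref{N(Kt)}, for all $t\ge t_0$ with $t_0$ large enough the $C_1/t^2+C_2/t$ terms are dominated by $q-q_0$, so $N(Kt)\le q\,N(t)$. Iterating yields $N(K^n t_0)\le q^n N(t_0)$, and splitting the tail integral along the geometric grid,
\[
\int_{t_0}^\infty t^p N(t)\,dt\le\sum_{n=0}^\infty (K^{n+1}t_0)^p\cdot (K-1)K^n t_0\cdot N(K^n t_0)\le C\sum_{n=0}^\infty (K^{p+1}q)^n<\infty,
\]
which is the required finiteness.

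The main obstacle is bookkeeping with the numerical constants: the factor $120$ in the hypothesis must simultaneously absorb the factor $8$ appearing in \eqref{N(Kt)} and the geometric weight $K^{p+1}\le 9$, and it is precisely this interplay that forces $\rho$ to be chosen as $1-1/(p+1)$ so that $K=(p+2)/p$ is small enough. The calculation above consumes only a factor of $72$ out of the available $120$, leaving comfortable slack; consistent with the remark following Theorem \ref{T1}, this already hints that the constants $120$ (and hence $240$) are not optimal.
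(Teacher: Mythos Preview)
Your proof is correct and rests on the same core ingredient as the paper's: apply Lemma~\ref{L1} with $\rho=1-1/(p+1)$ and verify the contraction $K^{p+1}\cdot\dfrac{8\delta}{1-\rho}<1$. The only difference is in how \eqref{N(Kt)} is turned into finiteness of $\int_0^\infty t^pN(t)\,dt$. The paper substitutes $t\mapsto Kt$ in the integral to obtain a self-referential inequality
\[
\int_0^M t^pN(t)\,dt\le C_1K^{p+1}M_1+C_2K^{p+1}M_2+\frac{8\delta K^{p+1}}{1-\rho}\int_0^M t^pN(t)\,dt,
\]
which explicitly consumes the lower moments $M_1=\int t^{p-2}N(t)\,dt$ and $M_2=\int t^{p-1}N(t)\,dt$; you instead iterate the pointwise bound $N(Kt)\le qN(t)$ along the geometric grid $K^nt_0$ and sum. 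Your route is slightly more direct---it never actually uses the hypothesis $\E|X|^p+\E|Y|^p<\infty$, only that $N\le 2$---and your estimate $K^{p+1}=(1+2/p)^{p+1}\le 9$ is sharper than the paper's $K^{p+1}<2e^2\approx 14.78$, which is why you end up with slack (a factor $72$ instead of $16e^2\approx 118$) against the constant $120$.
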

\begin{proof}
Using the notation of Lemma \ref{L1}, we want to show that if $M_1=\int_0^\infty t^{p-2} N(t) dt<\infty$  and
$M_2=\int_0^\infty t^{p-1} N(t) dt<\infty$
then
 \begin{equation}\label{MM}
 \sup_{M>0}\int_0^M t^p N(t) dt<\infty.
 \end{equation}
Note that since $p>1$, $M_1<\infty$ is equivalent to integrability of $|X|^{p-1}+|Y|^{p-1}$. The latter follows by H\"older inequality from the assumed  integrability of $|X|^{p}+|Y|^{p}$.

 Since  $p>1$ implies that $1/2<\rho<1$ and by assumption we have $64\delta(1-\rho)=64\delta (p+1)<1$,  from Lemma \ref{L1} we see that \eqref{N(Kt)} holds.
By a change of variable from \eqref{N(Kt)} we get
\begin{multline}\label{finB}
\int_0^M t^p N(t) dt= K^{p+1}\int_0^{M/K}t^pN(Kt) dt
\\\leq
C_1K^{p+1}\int_0^{M/K}t^{p-2} N(t)dt+C_2K^{p+1}\int_0^{M/K}t^{p-1} N(t)dt\\+
\frac{8\delta K^{p+1}}{1-\rho}\int_0^{M/K}t^{p} N(t)dt
\leq
 C_1K^{p+1}M_1+C_2K^{p+1}M_2+\frac{8\delta K^{p+1}}{1-\rho}\int_0^{M}t^{p} N(t)dt.
\end{multline}
By our choice of $\rho=1-1/(p+1)$, we have  $(1-1/(p+1))^{p+1}>(1-1/(p+1))1/e>1/(2e)$, so
 $$K^{p+1}=\left(\frac{2-\rho}{\rho}\right)^{p+1}=\frac{\left(1+\frac{1}{p+1}\right)^{p+1}}{\left(1-\frac{1}{p+1}\right)^{p+1}}<2e^2.$$
Thus $\frac{8\delta K^{p+1}}{1-\rho}<16 e^2 \delta (p+1)<120 \delta (p+1)<1$, and \eqref{finB}
 implies that \eqref{MM} holds.
\end{proof}
\begin{proof}[Proof of Proposition \ref{P1}]

 Given $p>1$, choose $\rho=1-1/{(p+1)}$.   Note that $\rho>1/2$.
 Take
 $$s=\frac{\rho \sqrt{\tau}}{\sqrt{\sigma}},\; t=\frac{\sqrt{\tau}}{\rho\sqrt{\sigma}}.
 $$ Then from \eqref{cov} we get $\E(XY)=\rho$.  From \eqref{qmart1}  and \eqref{qmart2} we get
% \begin{eqnarray}
% \E(X-\rho Y)^2|Y)&=&\frac{1-\rho^2}{1+\tau\rho}\left(1+\theta \sqrt{\rho}Y+ \tau \rho Y^2\right) \label{B1}\\
%  \E(Y-\rho X)^2|X)&=&\frac{1-\rho^2}{1+\tau\rho}\left(1+\eta \sqrt{\rho}X+ \tau \rho X^2\right)\label{B2}
% \end{eqnarray}

 \begin{eqnarray*}
 \E((X-\rho Y)^2|Y)&\leq&\frac{1-\rho ^2}{1+\sqrt{\sigma  \tau } \rho}\left(1+\theta  \sqrt{\rho } \sqrt[4]{\frac{\sigma }{\tau }}Y+ \rho  \sqrt{\sigma  \tau } Y^2\right), \label{B1}\\
  \E((Y-\rho X)^2|X)&\leq&\frac{1-\rho ^2}{1+\sqrt{\sigma  \tau } \rho }\left(1+\eta  \sqrt{\rho } \sqrt[4]{\frac{\tau }{\sigma }} X+ \ \rho\sqrt{\sigma\tau} X^2\right).\label{B2}
 \end{eqnarray*}
 So
inequalities \eqref{C1} and \eqref{C2} hold  with $\delta=2\sqrt{\sigma\tau}$
% \begin{eqnarray}
% \E(X-\rho Y)^2|Y)&\leq&A+B|Y|+ (1-\rho)  \rho \delta Y^2 \label{C1}\\
%  \E(Y-\rho X)^2|X)&\leq&A+B|X|+ (1-\rho)  \rho \delta X^2\label{C2}
% \end{eqnarray}
for some constants $A,B\geq 0$.  The result follows from Lemma \ref{L2}.
\end{proof}
\section{Integrability conjecture}
Let $\delta>0$. Recall that  $(X_t)_{t\in T}$ is a quadratic harness on a non-empty open interval $T$ with parameters $(\eta,\theta,\sigma,\tau,\gamma)$ if it  satisfies \eqref{cov}, \eqref{mart} for all $s<t$ in $T$, and in addition that  \eqref{qmart1}  and \eqref{qmart2}  hold with equality, and  that  for all $s<t<u$ in $T$,
 \begin{equation}\label{LR}
 E(X_t|X_s,X_u)=\frac{u-t}{u-s}X_s+\frac{t-s}{u-s}X_t
 \end{equation}
and   \begin{multline}\label{Var}
 \Var(X_t|X_s,X_u)=F_{t,s,u}\Big(1+\theta \frac{X_u-X_s}{u-s}
+ \eta \frac{uX_s-sX_u}{u-s}
\\+\tau \frac{(X_u-X_s)^2}{(u-s)^2}+ \sigma
\frac{(uX_s-sX_u)^2}{(u-s)^2} -(1-\gamma)\frac{(uX_s-sX_u)(X_u-X_s)}{(u-s)^2}
 \Big),
\end{multline}
where
\begin{equation*}\label{Ftsu}
F_{t,s,u}=\frac{(u-t)(t-s)}{u(1+s\sigma)+\tau-s\gamma}.
\end{equation*}

Theorem \ref{T1}  does not use  \eqref{LR}, \eqref{Var}, so it does not use the all properties of a quadratic harness.
\begin{conjecture}\label{Con1}
Suppose that $(X_t)$ is a quadratic harness on $(1-\delta,1+\delta)$ for some $\delta>0$.
\begin{enumerate}
\item[(i)] If $0<\sigma\tau<1$ and  $1-2\sqrt{\sigma\tau}\leq \gamma\leq 1+2\sqrt{\sigma\tau}$, then  $\E(|X_t|^p)<\infty$ for all $0\leq p<2+\frac{1}{\sigma\tau}$.
\item[(ii)] If $\sigma\tau$ is small enough and $-1\leq \gamma\leq 1-2\sqrt{\sigma\tau}$, then $\E(|X_t|^p)<\infty$ for all $p\geq 0$.
\end{enumerate}
\end{conjecture}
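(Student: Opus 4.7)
My plan is to exploit the full quadratic harness structure via the two-sided conditioning \eqref{LR} and \eqref{Var}, which is not used in the proof of Theorem \ref{T1}. Evaluating \eqref{Var} at $t=1$ with $s=1-\epsilon$, $u=1+\epsilon$ expresses $\Var(X_1|X_s,X_u)$ as a quadratic form in $(X_s,X_u)$ whose leading part combines $\sigma X_s^2$, $\tau X_u^2$, and a cross term with coefficient $-(1-\gamma)$ (modulo factors of $s,u$). The discriminant of this form is proportional to $(1-\gamma)^2-4\sigma\tau$, so the thresholds $\gamma=1\pm2\sqrt{\sigma\tau}$ appearing in the conjecture are precisely the signature transitions of this quadratic form. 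This is the algebraic reason I expect the two regimes to behave differently.

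For part (i), when $(1-\gamma)^2\leq 4\sigma\tau$ the form is positive semidefinite and factors (up to a lower-order remainder) as a positive multiple of $(X_u-\mu X_s)^2$ for a suitable $\mu$. The plan is to redo the tail analysis of Lemma \ref{L1} but with two-sided conditioning: decompose the event $\{\max(|X_s|,|X_u|)>Kt\}$ and control each piece by a conditional Chebyshev inequality applied to $\Var(X_1|X_s,X_u)$ in place of \eqref{C1}--\eqref{C2}. Because the leading quadratic coefficient in the resulting tail bound now scales like $\sigma\tau$ rather than $\sqrt{\sigma\tau}$, the analog of Lemma \ref{L2} with $\rho=1-1/(p+1)$ should accommodate $p$ of order $1/(\sigma\tau)$, matching the conjectured bound $2+1/(\sigma\tau)$. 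A cleaner route, once some finite moment beyond $2$ is established, is to identify the univariate law of $X_1$ with an Askey--Wilson distribution (using the machinery of \cite{Bryc-Wesolowski-08}) and read off its power-law tail exponent directly from the parameters. For part (ii), the regime $\gamma\leq 1-2\sqrt{\sigma\tau}$ makes the quadratic form indefinite, and for $\sigma\tau$ sufficiently small one can choose the Chebyshev projection direction so that the positive $N(t)$ coefficient in the recursion of Lemma \ref{L1} disappears entirely. The resulting inequality $N(Kt)\leq(C_1/t^2+C_2/t)N(t)$ iterates to give faster-than-polynomial decay of $N(t)$, hence finiteness of all moments.

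The principal obstacle I anticipate is the algebraic bookkeeping required to diagonalize the cross term in \eqref{Var} while keeping the lower-order linear and constant pieces under control, and in particular producing the \emph{sharp} constant $1/(\sigma\tau)$ rather than merely a bound of that order. Getting this constant correct will likely require sending $\epsilon\to 0$ in the symmetric choice $s=1-\epsilon$, $u=1+\epsilon$ and carefully tracking how the constants $C_1,C_2$ and the scaling factor $K=(2-\rho)/\rho$ in the analog of Lemma \ref{L2} degenerate in that limit. Whether this infinitesimal-$\epsilon$ argument can be made rigorous without first invoking an a priori Askey--Wilson identification of the marginal (and then a moment-determinacy argument in the power-law regime, which is itself nontrivial when only finitely many moments exist) is the step I am least confident about; matching the boundary $p=2+1/(\sigma\tau)$ with Maja's \cite{Maja:2009} upper bound on one side of the parameter region also suggests that the constants here cannot be improved and thus must come out of the analysis exactly, not as a byproduct.
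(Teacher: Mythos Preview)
The statement you are attempting to prove is labeled a \emph{Conjecture} in the paper, and the paper does \emph{not} provide a proof of it. Immediately after stating Conjecture~\ref{Con1}, the author only notes that the degenerate case $\sigma\tau=0$ follows from Theorem~\ref{T1}, and then proves a stronger form of part~(ii) in the single special case $\gamma=-1$ (Proposition~\ref{P2}) by computing a Hankel determinant once fourth moments are known. There is therefore no ``paper's own proof'' to compare your proposal against.

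Your proposal is a plausible research plan rather than a proof, and you acknowledge as much: the two-sided conditional variance \eqref{Var} does have leading quadratic part with discriminant $(1-\gamma)^2-4\sigma\tau$, so the thresholds $\gamma=1\pm 2\sqrt{\sigma\tau}$ are indeed where the signature changes, and this is a reasonable heuristic for why the two regimes should differ. However, several of the steps you list are genuinely open. In particular, your ``cleaner route'' via Askey--Wilson identification presupposes exactly the integrability you are trying to establish (the orthogonal-polynomial machinery of \cite{Bryc-Wesolowski-08} requires all moments a priori, as the introduction of the present paper emphasizes), so that route is circular without an independent argument. The tail-recursion idea for part~(i) is in the spirit of Lemmas~\ref{L1}--\ref{L2}, but the paper's author, who devised those lemmas, still records the sharp constant $2+1/(\sigma\tau)$ only as a conjecture; your own caveat about the $\epsilon\to 0$ limit and the need for exact constants is well placed and identifies the real obstruction. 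In short: you have not found a gap in your reasoning so much as rediscovered why the statement is posed as a conjecture.
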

By Theorem \ref{T1},
Conjecture \ref{Con1} is true for $\sigma\tau=0$, and this result has essentially been known, although it was stated only for  quadratic harnesses on $(0,\infty)$.
An even stronger version of Conjecture \ref{Con1}(ii), formulated by J. Weso\l owski, says that if
$-1\leq \gamma<1-2\sqrt{\sigma\tau}$ and $\sigma\tau$ is small enough then  $X_t$ is bounded. Here we indicate that this stronger version of Conjecture \ref{Con1}(ii) holds true for $\gamma=-1$.

\begin{proposition}\label{P2} Suppose that $(X_t)$ is a quadratic harness on $(1-\delta,1+\delta)$ for some $\delta>0$, with parameters $\gamma=-1$ and $\sigma\tau<1/921600\approx 10^{-6}$. Then for every $t\in(1-\delta,1+\delta)$, random variable $X_t$ has (at most) two values.
\end{proposition}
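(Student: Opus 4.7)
The plan is to combine the integrability bound of Theorem~\ref{T1} with a careful analysis of the variance constraint at $\gamma=-1$. Since $\sigma\tau<1/921600=(4\cdot 240)^{-2}$, Theorem~\ref{T1} yields $\E|X_t|^p<\infty$ for every $p<1/(240\sqrt{\sigma\tau})$, a range that contains $p=4$; in particular all fourth moments are finite and every quantity appearing below is well-defined.

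At $\gamma=-1$, equation \eqref{Var} becomes $\Var(X_t|X_s,X_u)=F_{t,s,u}\,Q(A,B)$ with
\[
Q(A,B):=1+\theta A+\eta B+\tau A^2+\sigma B^2-2AB,\qquad A=\frac{X_u-X_s}{u-s},\ B=\frac{uX_s-sX_u}{u-s}.
\]
Non-negativity of the conditional variance forces $Q(A,B)\geq 0$ almost surely for every $s<u$ in $(1-\delta,1+\delta)$. The crucial observation is that the quadratic part $\tau A^2+\sigma B^2-2AB$ has discriminant $4(1-\sigma\tau)>0$, hence is an indefinite form. Substituting $B=X_s-sA$ rewrites $Q\geq 0$ as a quadratic inequality in $A$ with positive leading coefficient $\tau+\sigma s^2+2s$, whose discriminant, viewed as a polynomial in $X_s$, has leading coefficient $4(1-\sigma\tau)>0$. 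Consequently, once $|X_s|$ exceeds a threshold $M_0$ depending only on the harness parameters, the slope $A=(X_u-X_s)/(u-s)$ must avoid an open interval of positive length; equivalently, the conditional law of $X_u$ given $X_s$ avoids a strip of width of order $(u-s)\sqrt{1-\sigma\tau}\,|X_s|$ centered at an affine function of $X_s$.

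I would next combine this forbidden strip with the martingale identities \eqref{mart} and re-run the tail-estimate scheme of Lemma~\ref{L1}, but using the \emph{equality} in \eqref{Var} in place of the one-sided bounds \eqref{qmart1}--\eqref{qmart2}. The hypothesis $\sigma\tau<1/921600$ is tuned so that the resulting tail recursion has a contraction factor strictly less than $1$; iterating it should first force $\Pr(|X_t|>M)=0$ for $M$ large (so that $X_t$ is a.s.\ bounded), and then concentrate the remaining mass onto the (at most two) points of the support of $X_t$ that lie on the boundary conic $\{Q=0\}$.

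The main obstacle is this last collapse, namely upgrading ``$X_u$ given $X_s$ avoids an interval of positive width'' to ``$X_t$ is supported on at most two points.'' The one-sided bounds used to prove Theorem~\ref{T1} are too weak for this step; one must exploit the precise identity in \eqref{Var}, combined with the finite fourth moment from the first step, to rule out any continuous component of the law of $X_t$. The explicit constant $1/921600$ is the threshold at which this propagation argument can be made to close.
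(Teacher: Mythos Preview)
Your opening paragraph is correct and matches the paper exactly: the constant $1/921600=1/(4\cdot 240)^2$ is chosen precisely so that Theorem~\ref{T1} delivers finite fourth moments.

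Beyond that, your proposal does not close, and you acknowledge as much. The ``forbidden strip'' observation is a true consequence of the indefiniteness of the quadratic part of \eqref{Var} at $\gamma=-1$, but the mechanism you outline---iterating a tail recursion in the spirit of Lemma~\ref{L1} until the support collapses---cannot reach the conclusion. A contraction inequality $N(Kt)\leq cN(t)$ with $c<1$ yields at best polynomial (or, iterated, stretched-exponential) tail decay; it does not produce compact support, and certainly not a two-point support. The step you label ``the main obstacle'' is in fact the entire content of the proposition once integrability is in hand, and nothing in your sketch addresses it.

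The paper's argument is of a completely different, algebraic nature. With fourth moments available, one computes $\E(X_t^3)$ and $\E(X_t^4)$ explicitly (via the moment recursions of \cite[Lemma~3.5]{Bryc-Matysiak-Wesolowski-04}) and evaluates the $3\times 3$ Hankel determinant
\[
\det\!\left[\begin{matrix} 1 & 0 & t \\ 0 & t & \E(X_t^3) \\ t & \E(X_t^3) & \E(X_t^4)\end{matrix}\right],
\]
which turns out to carry an overall factor of $(1+\gamma)$. At $\gamma=-1$ it therefore vanishes identically, and the classical moment criterion then forces the law of $X_t$ to be supported on at most two points. So the role of $\gamma=-1$ is not to drive a geometric exclusion argument but simply to kill a Hankel determinant; the finite-fourth-moment step is needed only to make that determinant well-defined.
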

\begin{proof}[Sketch of proof] Since  $\sigma\tau<1/921600$, from Theorem \ref{T1} we deduce that the $4$-th moments exist.
From a longer calculation based on the arguments in the proof of \cite[Lemma 3.5]{Bryc-Matysiak-Wesolowski-04}, one can obtain the formulas for the first four moments, and compute
  the Hankel determinant
$$
\det \left[\begin{matrix}
1& 0 & t \\
0& t &\E(X_t^3)\\
t & \E(X_t^3) &\E (X_t^4)
\end{matrix}\right]
= (1+\gamma)\frac{ (t+\tau ) (1+t \sigma ) \left((\eta\tau+\theta )(\eta+\theta\sigma) +\left(1-\sigma
\tau\right)^2 \right)}{
   1-(2+\gamma) \sigma \tau}.
$$
Since $\gamma=-1$, the Hankel determinant is zero, and the distribution is concentrated on two points.
\end{proof}

 \subsection*{Acknowledgement} This research was partially supported by NSF
grant \#DMS-0904720. The author  thanks Politechnika Warszawska  for its hospitality during the work on the paper.
The research benefited from  several discussions with Jacek Weso\l owski.
\bibliographystyle{apa}
\bibliography{../vita,moms2011}

%\newpage
%\longcomment{June 28, 2011:
%
%So perhaps we should re-visit the higher moments approach: given small enough $\sigma\tau$ we have fourth moments, and the  conditional moments $E(X_t-X_s)^4|X_s)$ are polynomials.  Where does $\gamma$ enter the formulas?
%}
%
%\comment{There is little hope to get higher moments. Even 4-th moment (doable via Mathematica!) are too messy to easily decipher what is going on or how much improvement one could get
%
%}

\end{document}